\renewenvironment{proof}[1][\proofname] {\par\pushQED{\qed}\normalfont\topsep6\p@\@plus6\p@\relax\trivlist\item[\hskip\labelsep\bfseries#1\@addpunct{.}]\ignorespaces}{\popQED\endtrivlist\@endpefalse}
\newtheorem{proposition}{Proposition}
\newtheorem{lemma}[proposition]{Lemma}
\newtheorem{theorem}[proposition]{Theorem}
\newtheorem{question}[proposition]{Question}
\theoremstyle{definition}
\newtheorem*{remark*}{Remark}
\newtheorem*{theorem*}{Theorem}
\DeclareMathSymbol{\lsb@l}{\mathalpha}{letters}{`l}
\title{Large hypergraphs without tight cycles}
\author{Barnabás Janzer\thanks{Department of Pure Mathematics and Mathematical Statistics, University of Cambridge, Wilberforce Road, Cambridge CB3 0WB, United Kingdom. Email: bkj21@cam.ac.uk. This work was supported by EPSRC DTG.}}
\date{\vspace{-21pt}}
\begin{document}
	\maketitle
	
\begin{abstract}
An $r$-uniform tight cycle of length $\ell>r$ is a hypergraph with vertices $v_1,\dots,v_\ell$ and edges $\{v_i,v_{i+1},\dots,v_{i+r-1}\}$ (for all $i$), with the indices taken modulo $\ell$. It was shown by Sudakov and Tomon that for each fixed $r\geq 3$, an $r$-uniform hypergraph on $n$ vertices which does not contain a tight cycle of any length has at most $n^{r-1+o(1)}$ hyperedges, but the best known construction (with the largest number of edges) only gives $\Omega(n^{r-1})$ edges. In this note we prove that, for each fixed $r\geq 3$, there are $r$-uniform hypergraphs with $\Omega(n^{r-1}\log n/\log\log n)$ edges which contain no tight cycles, showing that the $o(1)$ term in the exponent of the upper bound is necessary.
\end{abstract}

\section{Introduction}

A well-known basic fact about graphs states that a graph on $n$ vertices containing no cycle of any length has at most $n-1$ edges, with this upper bound being tight. To find generalisations of this result (and other results concerning cycles) for $r$-uniform hypergraphs with $r\geq 3$, we need a corresponding notion of cycles in hypergraphs. There are several types of hypergraph cycles for which Turán-type problems have been widely studied, including Berge cycles and loose cycles \cite{bollobas2008pentagons,furedi2014hypergraph,gyHori2006triangle,gyaari2012hypergraphs,jiang2018cycles,kostochka2015turan}. In this note we will consider tight cycles, for which it appears to be rather difficult to obtain extremal results.

Given positive integers $r\geq 2$ and $l>r$, an $r$-uniform tight cycle of length $l$ is a hypergraph with vertices $v_1,\dots,v_l$ and edges $\{v_i,v_{i+1},\dots,v_{i+r-1}\}$ for $i=1,\dots,l$, with the indices taken modulo $l$. Observe that for $r=2$ a tight cycle of length $l$ is just a cycle of length $l$ in the usual sense. Let $f_r(n)$ denote the maximal number of edges that an $r$-uniform hypergraph on $n$ vertices can have if it has no subgraph isomorphic to a tight cycle of any length. So $f_2(n)=n-1$. It is easy to see that the hypergraph obtained by taking all edges containing a certain point is tight-cycle-free, giving a lower bound $f_r(n)\geq \binom{n-1}{r-1}$. Sós and independently Verstra{\"e}te (see \cite{verstraete2016extremal}) raised the problem of estimating $f_r(n)$, and asked whether the lower bound $\binom{n-1}{r-1}$ is tight. This question was answered in the negative by Huang and Ma \cite{huang2019tight}, who showed that for $r\geq 3$ there exists $c_r>0$ such that if $n$ is sufficiently large then $f_r(n)\geq (1+c_r)\binom{n-1}{r-1}$. Very recently, Sudakov and Tomon \cite{sudakov2020extremal} showed that $f_r(n)\leq n^{r-1+o(1)}$ for each fixed $r$, and commented that it is widely believed that the correct order of magnitude is $\Theta(n^{r-1})$. The main result of this paper is the following theorem, which disproves this conjecture.

\begin{theorem}\label{thm_tightcycles}
	For each fixed $r\geq 3$ we have $f_r(n)=\Omega(n^{r-1}\log{n}/\log\log{n})$. In particular, $f_r(n)/n^{r-1}\to \infty$ as $n\to \infty$.
\end{theorem}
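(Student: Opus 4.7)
Since a single star already attains $\binom{n-1}{r-1}$ edges without tight cycles, and the target improves on this by a factor $\log n/\log\log n$, my plan is to combine $k := \Theta(\log n/\log\log n)$ star-like pieces into one hypergraph while preserving tight-cycle-freeness. Fix $k$ distinguished vertices $v_1,\dots,v_k$, call them \emph{anchors}, and aim for an edge set in which every edge contains some anchor; say that an edge is ``anchored at $v_i$'' if $v_i$ is its smallest-index anchor. The hope is that each anchor contributes $\Theta(\binom{n-1}{r-1})$ edges, so that the total is $\Theta(k n^{r-1}) = \Omega(n^{r-1}\log n/\log\log n)$.

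The fully naive attempt---take every $r$-subset meeting $\{v_1,\dots,v_k\}$---correctly yields the desired $\Theta(kn^{r-1})$ edges, but does contain tight cycles: already for $r=3$, the $6$-cycle $v_1,a,b,v_2,c,d$ (with $a,b,c,d$ outside $\{v_1,v_2\}$) consists of six edges each meeting $\{v_1,v_2\}$. So the stars must be thinned. My plan is to thin them by attaching a label $\lambda(v)$ to every non-anchor vertex (in a label space of size $\Theta(k)$, possibly richer than $[k]$) and including the edge $\{v_i\}\cup T$ only when the label pattern on $T$ is compatible with $i$. The compatibility rule is to be designed so that, along any putative tight cycle, the sequence of anchor indices $a(j)$ cannot strictly increase from $E_j$ to $E_{j+1}$; cyclically this forces $a(\cdot)$ to be constant, which reduces the cycle to one inside a single star---known to be impossible since a star forces the centre to appear in every $r$-window, contradicting $\ell>r$.

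Concretely, for a putative tight cycle $w_1,\dots,w_\ell$ with edges $E_j=\{w_j,\dots,w_{j+r-1}\}$ and anchors $v_{a(j)}$, the case when $v_{a(j)}$ survives into $E_{j+1}$ should give $a(j+1)\leq a(j)$ immediately from the label rule. The delicate case is when $v_{a(j)}=w_j$, the vertex being dropped: the new anchor in $E_{j+1}$ could a priori be a $v_i$ with $i>a(j)$, and one has to rule this out using the labels of the $r-1$ surviving vertices together with the freshly incoming $w_{j+r}$. The main obstacle, and the technical heart of the construction, is finding a labelling-and-rule that simultaneously (a) preserves $\Theta(\binom{n-1}{r-1})$ edges per star and (b) gives the monotonicity in this drop case. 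A first attempt such as ``every vertex of $T$ has label $\geq i$'' satisfies (a) easily but fails (b), since the $6$-cycle above survives whenever $a,b,c,d$ all have label $\geq 2$. The labelling likely has to encode more than a single index---for instance a pair of coordinates that separately control the ``incoming'' and ``outgoing'' ends of the shift $E_j\to E_{j+1}$---so that the two transitions at a drop are jointly forced to lower the anchor. Calibrating this rule, and verifying the anchor-monotonicity statement for every possible cycle, is what I expect to be the real work of the proof.
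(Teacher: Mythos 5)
Your write-up is a plan rather than a proof: the object you need --- a labelling of the non-anchor vertices and a compatibility rule that simultaneously (a) keeps $\Theta(n^{r-1})$ edges per anchor and (b) forces the anchor index $a(j)$ to be non-increasing around any tight cycle, in particular in the drop case $v_{a(j)}=w_j$ --- is exactly the part you leave open. The only concrete rule you test (``every vertex of $T$ has label $\geq i$'') you correctly show fails, and no working candidate is proposed, so the heart of the argument is missing. There is also no reason given why $k$ should be limited to $\Theta(\log n/\log\log n)$ in your framework; in the actual proof that threshold emerges from a genuine trade-off (girth versus density of bipartite graphs), which is a sign that the anchor picture is not capturing where the difficulty lives.

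For comparison, the paper does not thin stars at all. For $r=3$ it covers $(1-o(1))n^2$ edges of $K_{n,n}$ by $t=\lfloor n/k\rfloor$ edge-disjoint subgraphs $G_1,\dots,G_t$, each with no cycle of length at most $2k$ (obtained from random copies of a fixed such graph with $n^{1+c/k}$ edges; this is what forces $k\leq\alpha\log n/\log\log n$), and then builds a $3$-partite hypergraph on $X\cup Y\cup([t]\times[k])$ with edges $\{x,y,(i,s)\}$ for $\{x,y\}\in E(G_i)$, $s\in[k]$. A tight cycle must alternate $X,Y,Z$; the $Z$-vertices force all pairs $\{x_j,y_j\},\{y_j,x_{j+1}\}$ into a single $G_i$, so the cycle projects to a cycle of length $2\ell$ in $G_i$, giving $\ell>k$, while the $\ell$ distinct labels $(i,s_j)$ give $\ell\leq k$ --- a contradiction. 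Higher uniformities follow from $f_r(2n)\geq n\,f_{r-1}(n)$ by adjoining a class of $n$ extra vertices. Note the structural difference from your plan: every hyperedge contains one of $kt\approx n$ label vertices rather than one of only $k$ anchors, and the gain of a factor $k$ comes from replicating each bipartite edge $k$ times, not from summing $k$ stars. If you do want to pursue the anchor scheme, one true and useful observation is that any tight cycle in your hypergraph has length at most $rk$ (each of the $\ell$ windows contains an anchor, each anchor lies in $r$ windows, and anchors along a cycle are distinct), so only short tight cycles need to be excluded; but designing a thinning that kills those while keeping $\Omega(n^{r-1})$ edges per star is precisely the step your proposal does not supply.
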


The upper bound of Sudakov and Tomon \cite{sudakov2020extremal} is $n^{r-1}e^{c_r\sqrt{\log n}}$, although they remark that it might be possible to use their approach to get an upper bound of $n^{r-1}(\log{n})^{O(1)}$.\medskip

Concerning tight cycles of a given length, we mention the following interesting problem of Conlon (see \cite{mubayi2011hypergraph}), which remains open.

\begin{question}[Conlon]
	Given $r\geq 3$, does there exist some $c=c(r)$ constant such that whenever $l>r$ and $l$ is divisible by $r$ then any $r$-uniform hypergraph on $n$ vertices which does not contain a tight cycle of length $l$ has at most $O(n^{r-1+c/l})$ edges?
\end{question}

Note that we need the assumption that $l$ is divisible by $r$, otherwise a complete $r$-uniform $r$-partite hypergraph has no tight cycle of length $l$ and has $\Theta(n^{r})$ edges.

\section{Proof of our result}

The key observation for our construction is the following lemma.

\begin{lemma}\label{lemma_maintightlemma}
	Assume that $n,k,t$ are positive integers and $G_1, \dots, G_t$ are edge-disjoint subgraphs of $K_{n,n}$ such that no $G_i$ contains a cycle of length at most $2k$. Assume furthermore that $kt\leq n$. Then there is a tight-cycle-free $3$-partite $3$-uniform hypergraph on at most $3n$ vertices having $k\sum_{i=1}^{t}|E(G_i)|$ hyperedges.
\end{lemma}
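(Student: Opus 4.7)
The plan is to build $H$ explicitly, then rule out tight cycles by combining the girth hypothesis with the fact that each block on the third side contains only $k$ vertices. Take three disjoint vertex sets $A, B, C$, each of size $n$, and view the $G_i$'s as edge-disjoint subgraphs of $K_{A,B}$. Since $kt \leq n$, pick $t$ pairwise disjoint subsets $C_1, \ldots, C_t \subseteq C$ each of size exactly $k$. Define the $3$-partite hypergraph $H$ on $A \cup B \cup C$ by taking, for every $i$ and every edge $\{a,b\} \in E(G_i)$, the $k$ hyperedges $\{a, b, c\}$ with $c$ ranging over $C_i$. These hyperedges are pairwise distinct because the $G_i$'s are edge-disjoint, so $|E(H)| = k \sum_i |E(G_i)|$ while $|V(H)| \leq 3n$.

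Suppose for contradiction that $H$ contains a tight cycle $v_1 v_2 \cdots v_\ell$. Since $H$ is $3$-partite, consecutive triples $\{v_i, v_{i+1}, v_{i+2}\}$ must use all three classes, which forces the class of $v_{i+3}$ to equal that of $v_i$; so the classes have period $3$ along the cycle, giving $3 \mid \ell$. Writing $\ell = 3m$, we may assume $v_{3j+1} \in A$, $v_{3j+2} \in B$, $v_{3j+3} \in C$ for every $j$. For each edge $e_s = \{v_s, v_{s+1}, v_{s+2}\}$, let $i_s$ denote the unique index such that the $A$-$B$ pair in $e_s$ lies in $G_{i_s}$; by construction, the $C$-vertex of $e_s$ must then lie in $C_{i_s}$.

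The key step is to show that all the $i_s$ coincide with a single value $I$. Within each triple $e_{3j+1}, e_{3j+2}, e_{3j+3}$ the $C$-vertex is the common element $v_{3j+3}$, and disjointness of the $C_i$'s forces $i_{3j+1} = i_{3j+2} = i_{3j+3}$. Between consecutive triples, the edges $e_{3j+3}$ and $e_{3j+4}$ have the same $A$-$B$ projection $\{v_{3j+4}, v_{3j+5}\}$ (since in both cases the removed vertex lies in $C$), so edge-disjointness of the $G_i$'s gives $i_{3j+3} = i_{3j+4}$. Chaining these equalities cyclically yields a common value $I$.

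With $I$ in hand, the construction squeezes $m$ from opposite directions. The $m$ distinct $C$-vertices $v_3, v_6, \ldots, v_{3m}$ all lie in $C_I$, giving $m \leq |C_I| = k$. On the other hand, the $A$-$B$ projections of $e_1, e_2, e_3, e_4, \ldots$ trace out the closed walk $v_1, v_2, v_4, v_5, v_7, v_8, \ldots, v_{3m-2}, v_{3m-1}, v_1$ on $2m$ distinct vertices, which is a $2m$-cycle in $G_I$; the girth hypothesis then forces $2m > 2k$, i.e.\ $m > k$, contradicting $m \leq k$. I expect the rigidity argument pinning every cycle edge into a single $G_I$ to be the only substantive step; afterwards, the clash between $|C_I| = k$ and the girth of $G_I$ closes the proof.
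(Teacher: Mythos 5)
Your proposal is correct and takes essentially the same approach as the paper: the identical construction (each $G_i$ gets its own block of $k$ third-class vertices), followed by the same argument that a tight cycle must stay within a single $G_I$, whence the at most $k$ available $C_I$-vertices clash with the girth condition forcing the projected $2m$-cycle in $G_I$ to have length exceeding $2k$.
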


\begin{proof}
	Let the two vertex classes of $K_{n,n}$ be $X$ and $Y$, and let $Z=[t]\times [k]$. (As usual, $[m]$ denotes $\{1,\dots,m\}$.) Our 3-uniform hypergraph has vertex classes $X, Y, Z$ and hyperedges \begin{equation*}\{\{x,y,z\}:x\in X,y\in Y,z\in Z, z=(i,s)\textnormal{ for some } i\in[t]\textnormal{ and } s\in [k],\textnormal{ and } \{x,y\}\in E(G_i)\}.\end{equation*}
	In other words, for each $G_i$ we add $k$ new vertices (denoted $(i,s)$ for $s=1,\dots,k$), and we replace each edge of $G_i$ by the $k$ hyperedges obtained by adding one of the new vertices corresponding to $G_i$ to the edge.
	
	We need to show that our hypergraph contains no tight cycles. Since our hypergraph is 3-partite, it is easy to see that any tight cycle is of the form $x_1y_1z_1x_2y_2z_2\dots x_ly_lz_l$ (for some $l\geq 2$ positive integer) with $x_j\in X, y_j\in Y, z_j\in Z$ for all $j$. Assume that $z_1=(i,s_1)$. Then $\{x_1,y_1\},\{y_1,x_2\},\{x_2,y_2\}\in E(G_i)$. But $\{x_2,y_2\}\in  E(G_i)$ implies that $z_2$ must be of the form $(i,s_2)$ for some $s_2$. Repeating this argument, we deduce that there are $s_j\in [k]$ such that $z_j=(i,s_j)$ for all $j$, and $x_jy_j, y_jx_{j+1}\in E(G_i)$ for all $j$ (with the indices taken mod $l$). Hence $x_1y_1x_2y_2\dots x_ly_l$ is a cycle in $G_i$, giving $l>k$. But the vertices $z_j=(i,s_j)$ ($j=1,\dots,l$) must all be distinct, and there are $k$ possible values for the second coordinate, giving $l\leq k$. We get a contradiction, giving the result.
\end{proof}

We mention that Lemma \ref{lemma_maintightlemma} can be generalised to give $(r+r')$-uniform tight-cycle-free hypergraphs if we have edge-disjoint $r$-uniform hypergraphs $G_1,\dots, G_t$ not containing tight cycles of length at most $rk$ and edge-disjoint $r'$-uniform hypergraphs $H_1,\dots, H_t$ not containing tight cycles of length more than $r'k$. Indeed, we can take all edges $e\cup f$ with $e\in E(G_i), f\in E(H_i)$ for some $i$. (Then Lemma \ref{lemma_maintightlemma} may be viewed as the special case $r=2, r'=1$.)

\begin{lemma}\label{lemma_randomcyclefree}
	There exists $\alpha>0$ such that whenever $k\leq \alpha \log{n}/\log\log{n}$ then we can find edge-disjoint subgraphs $G_1,\dots,G_t$ of $K_{n,n}$ with $t=\lfloor n/k\rfloor$ such that no $G_i$ contains a cycle of length at most $2k$, and $\sum_{i=1}^{t}|E(G_i)|=(1-o(1))n^2$.
\end{lemma}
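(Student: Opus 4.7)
My plan is probabilistic: assign each edge of $K_{n,n}$ independently a uniformly random colour in $\{1,\dots,t\}$, where $t=\lfloor n/k\rfloor$, let $G_i$ be the set of edges of colour $i$, and then delete one edge from each short cycle. It suffices to show that the expected total number of cycles of length at most $2k$ across the $G_i$ is $o(n^2)$, because then a colouring attaining at most this expectation produces graphs from which only $o(n^2)$ edges need to be removed, leaving $(1-o(1))n^2$ edges in total.

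To bound this expectation, note that the number of cycles of length exactly $2j$ in $K_{n,n}$ is at most $n^{2j}/(2j)$, and any fixed such cycle is monochromatic in some colour with probability $t\cdot t^{-2j}=t^{1-2j}$. Since $t=\lfloor n/k\rfloor\geq n/(2k)$ for $n$ large, we have $n/t\leq 2k$, so the expected number of monochromatic short cycles is at most
\begin{equation*}
\sum_{j=2}^{k}\frac{n^{2j}}{2j}\,t^{1-2j}\;=\;t\sum_{j=2}^{k}\frac{(n/t)^{2j}}{2j}\;\leq\;\frac{n}{k}\cdot k\,(2k)^{2k}\;=\;n\,(2k)^{2k}.
\end{equation*}

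Now fix any colouring whose number of short monochromatic cycles does not exceed this expectation, and delete one edge from each such cycle. The resulting edge-disjoint subgraphs $G_1',\dots,G_t'$ of $K_{n,n}$ contain no cycle of length at most $2k$ and have at least $n^2-n(2k)^{2k}$ edges in total. The hypothesis $k\leq\alpha\log n/\log\log n$ enters precisely here: for $\alpha$ a sufficiently small absolute constant we have $2k\log(2k)\leq\tfrac12\log n$, hence $(2k)^{2k}\leq n^{1/2}$, and the number of deleted edges is $O(n^{3/2})=o(n^2)$. The only real obstacle is this parameter balance; the double-exponential $(2k)^{2k}$ becomes non-negligible compared with $n$ exactly when $k$ exceeds the order $\log n/\log\log n$, which explains the restriction in the statement and, via Lemma \ref{lemma_maintightlemma}, ultimately accounts for the $\log n/\log\log n$ factor in the main theorem.
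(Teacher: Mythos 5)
Your proof is correct, but it takes a genuinely different route from the paper's. The paper invokes as a black box the standard fact that $K_{n,n}$ contains a subgraph $H$ with no cycle of length at most $2k$ and $|E(H)|\geq n^{1+c/k}$, then places $t$ independent uniformly random copies of $H$ into $K_{n,n}$ and shows that the expected number of edges missed by every copy is $o(n^2)$; the condition $k\leq\alpha\log n/\log\log n$ enters there through the requirement that $n^{c/k}/k\to\infty$. You instead run a one-shot first-moment argument: a uniformly random $t$-colouring of $E(K_{n,n})$ followed by deleting one edge from each monochromatic cycle of length at most $2k$. Your computation checks out --- the count $n^{2j}/(2j)$ of $2j$-cycles, the probability $t^{1-2j}$ of monochromaticity, and the resulting bound $n(2k)^{2k}$ are all valid, the deletion step leaves each colour class free of short cycles while the classes remain edge-disjoint, and the estimate $(2k)^{2k}\leq n^{1/2}$ for small $\alpha$ holds uniformly over the allowed range of $k$ (for small $k$ the quantity $(2k)^{2k}$ is simply bounded, so the deletion is still $o(n^2)$). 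Your version is more self-contained, since it reproves the needed girth input in situ rather than citing it; it is worth noting that both arguments hit the same barrier $k=\Theta(\log n/\log\log n)$, in your case because $(2k)^{2k}$ must stay below $n$, and in the paper's because $n^{c/k}$ must beat $k$.
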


\begin{proof}
	It is well-known (and can be proved by a standard probabilistic argument) that there are constants $\beta, c>0$ such that if $n$ is sufficiently large and $k\leq \beta \log{n}$ then there exists a subgraph $H$ of $K_{n,n}$ which has no cycle of length at most $2k$ and has $|E(H)|\geq n^{1+c/k}$. We randomly and independently pick copies $H_1,\dots,H_t$ of $H$ in $K_{n,n}$. Let $G_1=H_1$ and $E(G_i)=E(H_i)\setminus\bigcup_{j=1}^{i-1}E(H_j)$ for $i\geq 2$. Then certainly the $G_i$ are edge-disjoint and no $G_i$ contains a cycle of length at most $2k$. Furthermore, the probability that a given edge is not contained in any $H_i$ is
	\begin{align*}
	(1-|E(H)|/n^2)^t&\leq \exp\left({-|E(H)|t/n^2}\right)\leq \exp\left({-n^{1+c/k}\lfloor n/k\rfloor /n^2}\right)=\exp\left({-n^{c/k}/k (1+o(1))}\right).
	\end{align*}
	This is $o(1)$ as long as $k\leq \alpha \log{n}/\log\log n$ for some constant $\alpha>0$.
	Therefore the expected value of $\left|\bigcup_{i=1}^t E(H_i)\right|$ is $(1-o(1))n^2$. Since $\sum_{i=1}^{t}|E(G_i)|=\left|\bigcup_{i=1}^t E(H_i)\right|$, the result follows.
\end{proof}

\begin{proof}[Proof of Theorem \ref{thm_tightcycles}]
	First consider the case $r=3$. Lemma \ref{lemma_randomcyclefree} and Lemma \ref{lemma_maintightlemma} together show that if $k\leq \alpha\log n/\log\log n$ then there is a tight-cycle-free 3-partite 3-uniform hypergraph on $3n$ vertices with $(1-o(1))kn^2$ edges. This shows $f_3(n)=\Omega(n^2\log n/\log\log n)$, as claimed.
	
	For $r\geq 4$, observe that $f_r(2n)\geq f_{r-1}(n)n$. Indeed, if $H$ is an $(r-1)$-uniform tight-cycle-free hypergraph on $n$ vertices, then we can construct a tight-cycle-free $r$-uniform hypergraph $H'$ on $2n$ vertices with $n|E(H)|$ edges as follows. The vertex set of $H'$ is the disjoint union of $[n]$ and the vertex set $V(H)$ of $H$, and the edges are $e\cup\{i\}$ with $e\in E(H)$ and $i\in [n]$. Then any tight cycle in $H'$ must be of the form $v_1v_2\dots v_{lr}$ with $v_i\in V(H)$ if $i$ is not a multiple of $r$ and $v_{i}\in [n]$ if $i$ is a multiple of $r$. But then we get a tight cycle $v_1v_2\dots v_{r-1}v_{r+1}v_{r+2}\dots v_{2r-1}v_{2r+1}\dots v_{lr-1}$ in $H$ by removing each vertex from $[n]$ from this cycle. This is a contradiction, so $H'$ contains no tight cycles. The result follows.
\end{proof}

\bibliographystyle{abbrv}
\bibliography{Bibliography}

\begin{thebibliography}{10}

\bibitem{bollobas2008pentagons}
B.~Bollob{\'a}s and E.~Gy{\H{o}}ri.
\newblock Pentagons vs. triangles.
\newblock {\em Discrete Mathematics}, 308(19):4332--4336, 2008.

\bibitem{furedi2014hypergraph}
Z.~F{\"u}redi and T.~Jiang.
\newblock Hypergraph {T}ur{\'a}n numbers of linear cycles.
\newblock {\em Journal of Combinatorial Theory, Series A}, 123(1):252--270,
  2014.

\bibitem{gyHori2006triangle}
E.~Gy{\H{o}}ri.
\newblock Triangle-free hypergraphs.
\newblock {\em Combinatorics, Probability and Computing}, 15(1-2):185--191,
  2006.

\bibitem{gyaari2012hypergraphs}
E.~Gy{\H{o}}ri and N.~Lemons.
\newblock Hypergraphs with no cycle of a given length.
\newblock {\em Combinatorics, Probability and Computing}, 21(1-2):193, 2012.

\bibitem{huang2019tight}
H.~Huang and J.~Ma.
\newblock On tight cycles in hypergraphs.
\newblock {\em SIAM Journal on Discrete Mathematics}, 33(1):230--237, 2019.

\bibitem{jiang2018cycles}
T.~Jiang and J.~Ma.
\newblock Cycles of given lengths in hypergraphs.
\newblock {\em Journal of Combinatorial Theory, Series B}, 133:54--77, 2018.

\bibitem{kostochka2015turan}
A.~Kostochka, D.~Mubayi, and J.~Verstra{\"e}te.
\newblock Tur{\'a}n problems and shadows {I}: {P}aths and cycles.
\newblock {\em Journal of Combinatorial Theory, Series A}, 129:57--79, 2015.

\bibitem{mubayi2011hypergraph}
D.~Mubayi, O.~Pikhurko, and B.~Sudakov.
\newblock Hypergraph {T}ur{\'a}n problem: Some open questions, 2011.

\bibitem{sudakov2020extremal}
B.~Sudakov and I.~Tomon.
\newblock The extremal number of tight cycles.
\newblock {\em arXiv preprint arXiv:2009.00528}, 2020.

\bibitem{verstraete2016extremal}
J.~Verstra{\"e}te.
\newblock Extremal problems for cycles in graphs.
\newblock In {\em Recent trends in combinatorics}, pages 83--116. Springer,
  2016.

\end{thebibliography}

\end{document}